\newcommand{\dd}{\mathrm{d}}
\newcommand{\I}{\mathrm{I}}
\newcommand{\II}{\mathrm{I\!I}}
\newcommand{\F}{\mathscr{F}}
\newcommand{\Ff}{\mathscr{F}_{\!f}}
\newcommand{\T}{\mathrm{T}}
\newcommand{\llangle}{\left\langle}
\newcommand{\rrangle}{\right\rangle}
\newcommand{\deltatnought}{\left.\frac{\partial}{\partial t}\right\vert_{t=0}}
\newcommand{\Order}{\mathscr{O}}
\newcommand{\grad}{\mathrm{grad}} 
\newcommand{\tangentpart}{\mathfrak{t}}
\newtheorem{theorem}{Theorem}
\theoremstyle{definition}
\newtheorem{remark}[theorem]{Remark}
\begin{document}

\title[A Characterisation of Manhart's Relative Normal Vector Fields.]{A Characterisation of Manhart's\\ Relative Normal Vector Fields.}

\author[S. Verpoort]{Steven Verpoort}

\dedicatory{(K.U.Leuven, Belgium, and  Masaryk University\,/\,Eduard \v{C}ech Center, Czech Republic.)}

\thanks{
The author, who was employed at K.U.Leuven during the commencement of this work, while he was supported by Masaryk University (Brno) during its conclusion, is thankful to both these institutions. 
This research was partially supported by 
the Research Foundation Flanders (project G.0432.07) and the Eduard \v{C}ech Center for Algebra and Geometry (Basic Research Center no. LC505).
}

\date{}

\address{K.U.Leuven, Departement Wiskunde, Celestijnenlaan 200B, 3001 Heverlee, Belgium.}

\curraddr{Masaryk University, Department of Mathematics and Statistics, 
Kotl\'a\v{r}sk\'a 2, 611 37 Brno, Czech Republic.}

\maketitle

\section{Introduction.}

In this article a relation between curvature functionals for surfaces in the Euclidean space and area functionals in relative differential geometry will be given.  

Relative differential geometry can be described as the geometry of surfaces in the affine space, endowed with a distinguished ``relative normal vector field'' which generalises the notion of unit normal vector field $N$ from Euclidean differential geometry. A concise review of relative differential geometry will be presented in \S\,2 (we refer to \cite{leichtweiss1993,simon_schwenk_viesel_1991} for more details). 

The main result, to which the title of this article refers, will be given in the third section. Here we consider, for a function $f$ of two variables, relative normal vector fields of the form
\begin{flalign*}
(\dagger)&\hspace{5cm}
f(H,K)\,N-\grad_{\II}(f(H,K))
&&
\end{flalign*}
for non-degenerate surfaces in the Euclidean three-dimensional space. A comparison of the variation of the curvature functional  
\begin{flalign*}
(\ast)&\hspace{6cm}
\int f(H,K)\,\dd\Omega&&
\end{flalign*}
with the relative area functional obtained from the above relative normal vector field, results in a distinguishing property for the one-parameter family of relative normal vector fields which was introduced by F.\ Manhart, and which is obtained by choosing $f(H,K)=|K|^{\alpha}$ (where we will assume that $\alpha\neq 1$). More precisely, the following will be shown in theorem~\ref{thm:manhart}:

``\textit{The curvature functionals $(\ast)$ for which the critical points coincide with the relative-minimal surfaces with respect to the relative normal vector field $(\dagger)$, are essentially those obtained from Manhart's family}.\!''

In the fourth section, we give a characterisation of the sphere by means of relations between the support function and the curvatures.

In the last section, we combine the previously described results and arrive at a variational characterisation of the sphere.

\section{Preliminaries on Relative Differential Geometry.}

The starting point of this brief notice on relative differential geometry is the observation that several important differential-geometric invariants for  surfaces in the Euclidean three-dimensional space are already determined by the affine structure of the ambient space along with knowledge of the unit normal vector field $N$. For instance, the shape operator (or Weingarten operator) can already be constructed from this information, and hence the mean and Gaussian curvature and the principal directions can be found as well. 

We will denote $\mathrm{D}$ for the standard connection of the affine space $\mathbb{A}^3$ and $\mathrm{det}(\cdot,\cdot,\cdot)$ for a fixed volume form.
The set of all tangent resp.\ $\mathbb{A}^3$--valued vector fields on a surface $M$ in  $\mathbb{A}^3$ will be denoted by $\mathfrak{X}(M)$ resp.\ $\overline{\mathfrak{X}}(M)$, and $\mathfrak{F}(M)$ stands for the collection of all real-valued functions on $M$.

We will study surfaces in the affine space which are \textit{non-degenerate} 
and \textit{oriented}. These requirements precisely mean that the surfaces have a nowhere degenerate Blaschke metric and that for every point $p$ of the surface, one of the two half-spaces bounded by the plane which is tangent to the surface at $p$ is designated as the positive one. For ovaloids, we will always consider the half-space which contains the surface as the positive one.

A vector field $y\in\overline{\mathfrak{X}}(M)$ along a non-degenerate oriented surface $M$ will be called a \textit{relative normal vector field} if for all $p\in M$ there holds $y_p \notin \T_p M$ and for all $V\in\mathfrak{X}(M)$, the vector field $\mathrm{D}_V y$ is again tangent to the surface. As such we can define the \textit{relative shape operator}
\[
A_{(y)} : \mathfrak{X}(M) \rightarrow \mathfrak{X}(M) : V \mapsto -\mathrm{D}_V y\,,
\]
and the \textit{relative mean curvature}  $H_{(y)}=\frac{1}{2}\mathrm{trace}(A_{(y)})$. The \textit{relative area element} is defined to be $\dd\Omega_{(y)}=y\lrcorner\,\mathrm{det}$. Further, the 
\textit{relative area} of $M$ w.r.t. $y$ is defined as 
\begin{equation}
\label{eq:areayM}
\textrm{Area}_{(y)}(M) = \int_M \dd\Omega_{(y)}\,.
\end{equation}

It should be remarked that besides the Euclidean normal vector field, also the Blaschke normal (or equi-affine normal vector field) and the position vector field (or centro-affine normal vector field) can be interpreted in this framework.

An ovaloid $\mathscr{M}$ which has been endowed with a relative normal vector field $y$ determines a relative normal vector field $y^{M}$ on every other non-degenerate oriented surface $M$. Herefore, we recall that the 
\textit{Peterson mapping} $\mathscr{P}:  M \rightarrow \mathscr{M}$ sends a point $m$ of the surface $M$ to the unique point $\mathscr{P}(m)$ of the ovaloid $\mathscr{M}$ such that $\mathrm{T}_m {M}$  is parallel to $\mathrm{T}_{\mathscr{P}(m)}\mathscr{M}$ whereby the positive half-spaces correspond up to translation. Now we define for every ${m}\in {M}$,
\begin{equation}
\label{eq:ytilde}
\left(y^{{M}}\right)({m}) 
\,=\, y(\mathscr{P}({m}))\quad\textrm{up to translation.}
\end{equation}
(See also figure \ref{fig:vecfield} on page \pageref{fig:vecfield}.)

Consequently, an ovaloid $\mathscr{M}$ endowed with a relative normal vector field $y$ determines a \textit{relative area functional} 
\begin{equation}
\label{eq:areaytildeM}
\mathrm{Area}_{(y)} : \left\{\textrm{\,non-degenerate oriented surfaces\rule{0pt}{14pt}\,}\right\} \rightarrow \mathbb{R}
: M \mapsto 
\mathrm{Area}_{(y)} ({M}) := 
\mathrm{Area}_{(y^{M})} (M)
\end{equation}
(where $y^{M}$ is defined in (\ref{eq:ytilde}) and 
the relative area of $M$ w.r.t.\ $y^M$ is determined as in (\ref{eq:areayM})), and the corresponding variational problem will be studied.

By a \textit{deformation} $\mu$ of a non-degenerate oriented surface $M$
will be understood a smooth mapping (for some
$\varepsilon > 0$)
\[
\mu: \,\left]\,-\varepsilon\, ,\, \varepsilon \,\right[ \,\times M \rightarrow
\mathbb{A}^{3} : (t,m) \mapsto \mu_t(m)\,,
\]
such that for all $m\in M$ there holds $\mu_0(m)=m$, and 
for some compact set $M_0\subseteq M$, 
for all $m\in M\setminus M_0$ and all $t\in\left]\,-\varepsilon\, ,\, \varepsilon \,\right[$,
there holds $\mu_t(m)=m$.
It is clear that the \textit{deformed surface}
$\mu_t(M)$ is also non-degenerate for small values of $\left|\varepsilon\right|$ and that the orientation on $M$ determines an orientation on 
$\mu_t(M)$. The $\mathbb{A}^{3}$-valued vector field
sending $m\in M$ to $Z(m)=\deltatnought\!\mu_t(m)$
will be called the \textit{deformation vector field}.
Furthermore, the \textit{variation of the relative area}
is defined by
\[
\delta \mathrm{Area}_{(y)} =\deltatnought \mathrm{Area}_{(y)}(\mu_t(M)).
\]
The following theorem is well-known. 

\begin{figure}[!t]
\begin{center}
\framebox{\includegraphics[angle=0,width=11cm]{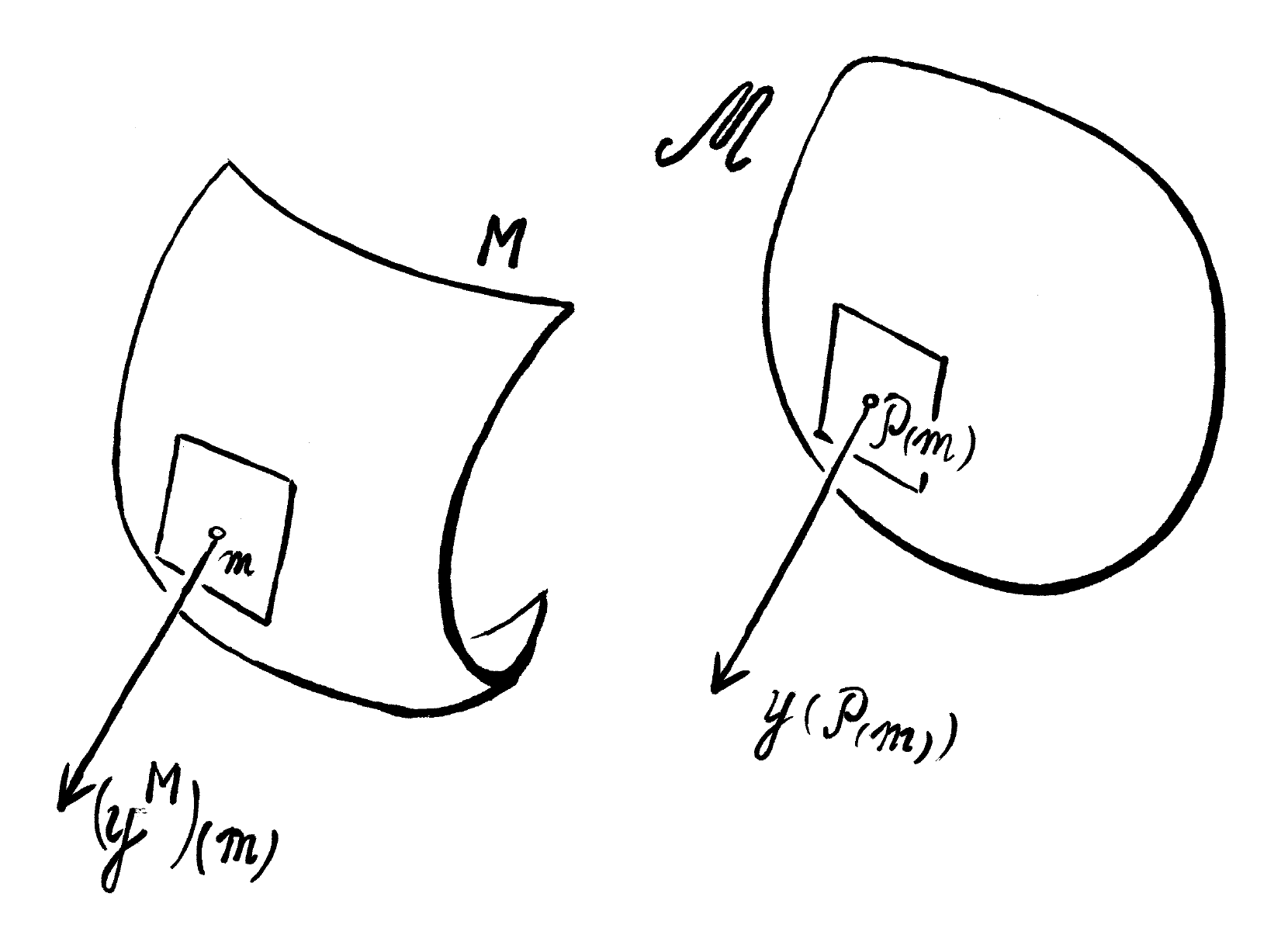}}
\caption{Construction of the vector field $y^{M}$ on 
$M$ starting from the vector field $y$ on $\mathscr{M}$.
}
\label{fig:vecfield}
\end{center}
\end{figure}

\begin{theorem}
Let $\mathscr{M}$ be an ovaloid with a relative normal vector field $y$. 
Under a deformation $\mu$ of a non-degenerate oriented surface $M$ with deformation vector field $Z=\varphi\,y^{M}+Z^{\tangentpart}$ (where $Z^{\tangentpart}$ is tangent to $M$), the variation of the relative area is given by
\begin{equation}
\label{eq:delta_rel_ar}
\delta \mathrm{Area}_{(y)} = -2\int_{M} \,\varphi\,H_{(y^{M})}\ \dd\Omega_{(y^{M})}\,.
\end{equation}
\end{theorem}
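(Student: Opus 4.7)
The plan is to reduce to a purely normal deformation, rewrite the relative area as an integral over the fixed reference surface $M$ by pulling back along $\mu_t$, differentiate under the integral sign, and show that the term arising from the $t$-dependence of $y^{M_t}$ vanishes; only the classical-looking terms then remain, and they assemble into the stated formula.

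First I would dispose of the tangential component $Z^{\tangentpart}$: since $\mathrm{Area}_{(y)}$ depends only on the oriented surface and not on the parametrisation, replacing $\mu_t$ by $\hat\mu_t(m):=\mu_t(\psi_{-t}(m))$, where $\psi_t$ is the flow of $Z^{\tangentpart}$, yields a deformation with the same one-parameter family of image surfaces but deformation vector field $\varphi\,y^M$. Hence I may assume $Z=\varphi\,y^M$. Setting $Y_t(m):=y^{M_t}(\mu_t(m))$ (interpreted via translation in affine space as in (\ref{eq:ytilde})), the relative area becomes
\[
\mathrm{Area}_{(y)}\bigl(\mu_t(M)\bigr)
=\int_M \mathrm{det}\!\left(\partial_1\mu_t,\partial_2\mu_t,Y_t\right)\,\dd u^1\,\dd u^2,
\]
and differentiating at $t=0$ produces three Leibniz terms: two from the tangent frame $\partial_i\mu_t$, and one from $Y_t$.

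The main obstacle is to show that the $\partial_tY_t|_{t=0}$ term contributes nothing. I would argue geometrically: the composition $t\mapsto\mathscr{P}_t\bigl(\mu_t(m)\bigr)$ is a curve into $\mathscr{M}$, so its velocity at $t=0$ lies in $T_{\mathscr{P}_0(m)}\mathscr{M}$; by the definition of the Peterson map this space is parallel to $T_mM$. Next, since $y$ is a relative normal vector field on $\mathscr{M}$, $\mathrm{D}$ applied to $y$ in a direction tangent to $\mathscr{M}$ returns a vector tangent to $\mathscr{M}$. Chaining these two facts shows that $\partial_tY_t|_{t=0}\in T_mM$, so that the determinant $\mathrm{det}(\partial_1\mu_0,\partial_2\mu_0,\partial_tY_t|_{t=0})$ vanishes.

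What remains is the expression obtained by differentiating with $Y_t$ held fixed at $y^M$. Using the relative Weingarten equation $\mathrm{D}_V y^M=-A_{(y^M)}(V)$ gives $\partial_i(\varphi y^M)=\partial_i\varphi\cdot y^M-\varphi\,A_{(y^M)}(\partial_i\mu_0)$. The contributions proportional to $\partial_i\varphi\cdot y^M$ drop out because they render two columns of the determinant parallel. The remaining two determinants combine via the identity $\mathrm{det}(AV,W,y^M)+\mathrm{det}(V,AW,y^M)=\mathrm{trace}(A)\,\mathrm{det}(V,W,y^M)$ into $2H_{(y^M)}\,\dd\Omega_{(y^M)}$, and carrying the overall minus sign yields $\delta\mathrm{Area}_{(y)}=-2\int_M\varphi\,H_{(y^M)}\,\dd\Omega_{(y^M)}$, as claimed.
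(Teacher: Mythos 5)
The paper states this theorem without proof, labelling it ``well-known,'' so there is no internal proof to compare against; judged on its own merits, your argument is correct and is the standard first-variation computation. The reduction to $Z=\varphi\,y^M$ by reparametrisation is legitimate (the flow of $Z^{\tangentpart}$ is complete and equal to the identity outside the compact set $M_0$, so the family of image surfaces, and hence the relative area, is unchanged), and your key step --- writing $Y_t(m)=y\bigl(\mathscr{P}_t(\mu_t(m))\bigr)$ up to translation, noting that $c(t)=\mathscr{P}_t(\mu_t(m))$ is a curve in $\mathscr{M}$ so that $\partial_t Y_t\vert_{t=0}=\mathrm{D}_{c'(0)}y$ is tangent to $\mathscr{M}$ by the defining property of a relative normal, hence after translation lies in $\T_m M$ and kills the determinant --- is exactly the right way to dispose of the $t$-dependence of the transferred normal. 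The surviving Leibniz terms then combine through the trace identity for the $2$-form $y^M\lrcorner\,\mathrm{det}$ restricted to $\T_mM$ to give $-2\varphi\,H_{(y^M)}\,\dd\Omega_{(y^M)}$, so the computation closes with the correct sign and factor.
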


\begin{remark}
The most important case occurs when
the vector field $y$ is simply minus the position vector field on the
 \textit{gauge ovaloid} (``Eichfl\"ache'') $\mathscr{M}$ with respect to a designated origin in the interior of $\mathscr{M}$. In this case the above construction is merely a small modification of the construction of several well-known objects in classical differential geometry. For instance, the Peterson mapping $\mathscr{P}: M \rightarrow\mathscr{M}$ between the ovaloid and the gauge ovaloid takes the place of the Gauss map, and up to identification of parallel tangent planes there holds $\dd\mathscr{P}=-A_{(y)}$. This construction of a relative normal vector field starting from a gauge ovaloid follows the original development of relative differential geometry in the spirit of Minkowski's article \cite{minkowski1903}, in which the relative area of a non-degenerate oriented surface $M$ with respect to the gauge ovaloid $\mathscr{M}$ is denoted as $3\,V(M,M,\mathscr{M})$ and called a \textit{mixed volume} of $M$ and $\mathscr{M}$.
\end{remark}

\begin{remark}
In the last-mentioned situation, let us introduce a Euclidean structure  which is compatible with the determinant function, and denote $F$ for the Euclidean support function of $\mathscr{M}$, which is seen as a function on the sphere by identification of the sphere and $\mathscr{M}$ under Peterson correspondence. Now, for any non-degenerate oriented surface $M$ 
(with Gauss mapping $\mathscr{G}$),  there holds
\[
\textrm{relative area of $M$ w.r.t.\ $\mathscr{M}$}
\ =\
\int_M \left( F\circ\mathscr{G}\right)\,\dd\Omega\,,
\]
where $\dd\Omega$ stands for the classical area element of the surface $M$ (which, \textit{e.g.}, can be constructed from its first fundamental form).
Thus we see that the area functional from relative differential geometry w.r.t.\ the gauge ovaloid $\mathscr{M}$ coincides with the anisotropic area 
functional determined by the function $F$, which has recently attracted a 
renewed interest (see, \textit{e.g.}, \cite{palmer2008}; \cite{wulff1901} \S\,8).
\end{remark}

\begin{remark}
Relative-minimal surfaces were first introduced by E.\ M\"uller in \cite{muller1921} (some helpful comments on this article are contained in
\cite{simon_schwenk_viesel_1991}, \S\,4.6.1), whereas they have been characterised variationally only afterwards by A. Duschek \cite{duschek1926} (who also refers to \cite{blaschkeII} p.\ 205, and a lecture by J. Radon). 

It can be seen that if a surface is relative-minimal w.r.t.\ a second surface, then the converse holds as well. Moreover, such pairs of surfaces were already studied in the context of infinitesimal isometric deformations by L.\ Bianchi, who called them \textit{associate surfaces}.
\end{remark}

\begin{remark}
\label{rmk:M_ovaloid}
Assume that $\mathscr{M}$ is a non-degenerate, oriented surface but not necessarily an ovaloid. Then the above results remain valid under the explicit requirement that for each point $m$ of the surface $M$ there uniquely exists a point $\mathscr{P}(m)$ of $\mathscr{M}$ such that $\mathrm{T}_m {M}$ is parallel to $\mathrm{T}_{\mathscr{P}(m)}\mathscr{M}$ whereby the positive half-spaces correspond up to translation. 
\end{remark}

\section{Manhart's Family of Relative Normal Vector Fields.}

We will assume in the sequel that $\mathscr{D}$ is an open set in $\mathbb{R}^2$ which contains the set
$\mathscr{A} = 
\left\{\,\rule{0pt}{10pt}(u,v)\in\mathbb{R}^2\,\vert\,u^2\geqslant v\ \textrm{and}\ v\neq 0 \,\right\}$.
Let now $f:\mathscr{D}\rightarrow\mathbb{R}:(u,v)\mapsto f(u,v)$ be an arbitrary smooth, nowhere vanishing function of two variables. For any non-degenerate surface $M$ in the Euclidean space, we define a vector field along $M$ by
\begin{equation}
\label{eq:relnorm}
N_f^M = f(H,K)\,N-\grad_{\II}(f(H,K)).
\end{equation}
Here $N$, $H$ and $K$ stand, respectively, for the Euclidean normal vector field, the mean curvature (which equals $H_{(N)}$) and the Gaussian curvature of the surface. The index $\II$ signals that the gradient has been determined w.r.t.\ the (Euclidean) second fundamental form geometry. It is precisely by choosing $-\grad_{\II}(f(H,K))$ as the tangential part of the vector field $N_f^M$ with prescribed normal part $f(H,K)\,N$, that a relative normal vector field results.

In the sequel, we will use some tensors which have been defined w.r.t.\ the second fundamental form geometry. For instance, the \textit{difference tensor} between the Levi-Civita connections of the second and the first fundamental form is defined as
\[
L : \mathfrak{X}(M) \times \mathfrak{X}(M) \rightarrow \mathfrak{X}(M)
 : (V,W) \mapsto \nabla^{\II}_V W - \nabla_V W.
\]
Some simple calculations related to the second fundamental form geometry which will be used below, can be found in \S\,2.3.2 of \cite{verpoort2008}.

Furthermore, if $\varphi$ is a function on $M$, then the \textit{Hessian operator} of $\varphi$ is defined as $\textrm{Hs}_{\varphi}: \mathfrak{X}(M)\mapsto\mathfrak{X}(M): V \mapsto \nabla_V (\grad \varphi)$. 
The \textit{Laplacian} of $\varphi$ (notation $\Delta \varphi$) is defined as the trace of this operator.
Of course, we can also define $\mathrm{Hs}_{\varphi}^{\II}$ resp.\ $\Delta_{\II}\varphi$ w.r.t.\ the second fundamental form geometry.

The relative shape operator determined by the relative normal vector field (\ref{eq:relnorm}) satisfies, for every vector field $V\in\mathfrak{X}(M)$,
\begin{eqnarray*}
A_{(N_f^M)}(V)
&=&
-\mathrm{D}_V \Big(f(H,K)N-\grad_{\II}(f(H,K))\Big)
\\
&=&
f(H,K)\,A(V) + \textrm{Hs}_{(f(H,K))}^{\II}(V)-L\big(\,\grad_{\II}(f(H,K))\,,\,V\,\big)\,.
\end{eqnarray*}
The relative mean curvature can be calculated as 
\begin{equation}
\label{eq:Hrel}
H_{(N_f^M)}
= 
f(H,K)\,H + \frac{1}{2}\,\Delta_{\II}( f(H,K) ) - 
\frac{1}{4}\,\II\big( \grad_{\II}(\log |K|) \,,\, \grad_{\II} (f(H,K))\, \big)\,.
\end{equation}
The relative normal vector field $N_f^M$ on $M$ induces a corresponding relative area functional
\begin{equation*}
\mathrm{Area}_{(N_f^M)} : \left\{\textrm{\,non-degenerate oriented surfaces\rule{0pt}{14pt}\,}\right\} \rightarrow \mathbb{R}
: \widetilde{M} \mapsto 
\mathrm{Area}_{(N_f^M)} (\widetilde{M}) \,.
\end{equation*}

Another functional of interest (which does not depend on $M$) is the curvature functional
\begin{equation*}
\Ff :\left\{\textrm{\,non-degenerate oriented surfaces\rule{0pt}{14pt}\,}\right\} \rightarrow \mathbb{R}
: \widetilde{M} \mapsto \F_f(\widetilde{M}) = \int_{\widetilde{M}} f(\widetilde{H},\widetilde{K}) \,\dd\widetilde{\Omega}\,.
\end{equation*}
(Here $\widetilde{H}$ and $\widetilde{K}$ stand for the mean and Gaussian curvature of $\widetilde{M}$, and $\dd\widetilde{\Omega}$ stands for the classical area element of $\widetilde{M}$.)

Of course there holds $\Ff(M)=\mathrm{Area}_{(N_f^M)}(M)$. But for a special choice of the function $f$, the quantities
\[
\delta \Ff
\qquad\textrm{and}\qquad
\delta \mathrm{Area}_{(N_f^M)} 
\]
are also connected by a simple relation for any deformation $\mu$ of $M$. This will be described in theorem \ref{thm:manhart} below.
The distinction between the last two quantities essentially originates from different approaches towards the question as how a relative normal vector field should be changed under a deformation of the surface. (Cf.\ also
\cite{wiehe1998} \S\,3.1 for some remarks on this issue.)

We can equip every deformed surface $\mu_t(M)$ with \textit{its own} relative normal vector field defined by (\ref{eq:relnorm}) (in which $N$, $K$ and $H$ have to be taken the Euclidean normal, the Gaussian curvature and the mean curvature of the \textit{deformed} surface  $\mu_t(M)$). In this way, the quantity $\delta \Ff$ can be considered as the first-order variation of the relative area along the path of deformed surfaces each of which has been endowed with its own relative normal vector field. 

(The more usual interpretation of $\delta \Ff$ of course does not use relative-differential-geometric concepts, but is rather the variation of the curvature functional $\int f(H,K)\,\dd\Omega$ under the deformation of the surface.)

On the other hand, the deformed surfaces can also be equipped with the \textit{initial} relative normal vector field, which has been transferred to the deformed surface by translation from a point on the initial surface to the point on the deformed surface which agrees under the Peterson correspondence (as explained above). The first-order change of the relative area which has been calculated in this way, is precisely 
$\delta \mathrm{Area}_{(N_f^M)}$.

The collection of all relative normal vector fields (\ref{eq:relnorm}) which is obtained by choosing the function $f$ to be $f(u,v)=|v|^{\alpha}$ for some $\alpha\neq 1$, wil be called \textit{Manhart's one-parameter family of relative normal vector fields}.
These vector fields have been introduced by F. Manhart in \cite{manhart1983}, who observed that the Euler-Lagrange equation of the functional $\int\sqrt{K}\,\dd\Omega$, which was derived by E. Gl\"assner in \cite{glassner1974}, precisely means that the relative mean curvature w.r.t.\ (\ref{eq:relnorm}), for $f(u,v)= \sqrt{|v|}$, vanishes. 

The relative differential geometry of surfaces w.r.t.\ this one-parameter family of relative normal vector fields has been investigated in a series of articles by F. Manhart \cite{manhart1982}--\cite{manhart1989_bis}. Particularly, some classification results for special classes of surfaces which are relative sphere w.r.t. such relative normal vector fields are described in these articles.

The above-mentioned property is distinctive for Manhart's family of relative normal vector fields, as is explained in the following theorem.

\begin{theorem}
\label{thm:manhart}
(i). Assume that $f(u,v)=|v|^{\alpha}$ for some $\alpha\in\mathbb{R}\setminus\{1\}$, \textit{i.e.}, that the relative normal vector field
(\ref{eq:relnorm}) belongs to Manhart's family of relative normal vector fields. Then for any deformation of any non-degenerate oriented surface $M$ there holds 
\begin{equation}
\label{eq:cond_mah}
\delta\, \Ff = 0 \quad \Leftrightarrow \quad \delta\, \textrm{Area}_{(N_f^M)} = 0\,.
\end{equation}
(ii). Moreover, the above circumstance can \textit{only} occur if, up to a factor, the relative normal vector field
(\ref{eq:relnorm}) belongs to Manhart's family of relative normal fields. 

More precisely, if $f:\mathscr{D}\rightarrow\mathbb{R}:(u,v)\mapsto f(u,v)$ is an arbitrary smooth, nowhere vanishing function of two variables such that the equivalence (\ref{eq:cond_mah}) holds, then there exist constants $q_1,q_2,q_3\in\mathbb{R}\setminus\{0\}$ and 
$\alpha\in\mathbb{R}\setminus\{1\}$ such that 
\begin{equation}
\label{eq:f}
f(u,v)=
\left\{
\begin{array}{lp{2mm}l}
q_1\,|v|^{\alpha} && \textrm{(for $u>0$ and $v>0$)\,;}\\
q_2\,|v|^{\alpha} && \textrm{(for $u<0$ and $v>0$)\,;}\\
q_3\,|v|^{\alpha} && \textrm{(for $v<0$)\,,}
\end{array}
\right.
\end{equation} 
is satisfied.
\end{theorem}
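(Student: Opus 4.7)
The plan is to convert both sides of the equivalence~(\ref{eq:cond_mah}) into integral expressions of the form $\int_M \psi\,(\cdot)\,\dd\Omega$, apply the fundamental lemma of the calculus of variations to turn that equivalence into a pointwise proportionality of the two integrands, and then extract the admissible shape of $f$ from that proportionality.

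For the right-hand side I would use~(\ref{eq:delta_rel_ar}) with $y=N_f^M$: since $N_f^M$ has $N$-component $f(H,K)$, one has $\dd\Omega_{(N_f^M)}=f(H,K)\,\dd\Omega$, and the $N_f^M$-component of $Z=\psi\,N+Z^{\tangentpart}$ equals $\psi/f(H,K)$, so that~(\ref{eq:delta_rel_ar}) collapses to
\[
\delta\,\mathrm{Area}_{(N_f^M)} \;=\; -2\int_M \psi\,H_{(N_f^M)}\,\dd\Omega,
\]
with $H_{(N_f^M)}$ given explicitly by~(\ref{eq:Hrel}). For the left-hand side, using the standard first-variation formulas for $\dd\Omega$, $H$ and $K$ under the normal deformation $\psi\,N$, followed by two integrations by parts, one obtains
\[
\delta\,\Ff \;=\; \int_M \psi\,E_f(H,K)\,\dd\Omega,
\]
where $E_f$ is a second-order linear differential expression in $H$ and $K$ whose leading terms come from $\Delta f_u$ and $\Delta f_v$. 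Rewriting $E_f$ in the $\II$-geometry via the identities of \S\,2.3.2 of \cite{verpoort2008}, which express $\Delta$ in terms of $\Delta_{\II}$ through the difference tensor $L$, puts it in the same structural form as~(\ref{eq:Hrel}) and enables a term-by-term comparison.

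For part (i), substituting $f(u,v)=|v|^{\alpha}$ into $E_f$ and $H_{(N_f^M)}$ should yield an identity $E_f=\lambda(\alpha)\,H_{(N_f^M)}$ with $\lambda(\alpha)\neq 0$ precisely when $\alpha\neq 1$; this is consistent with the Gl\"assner--Manhart Euler--Lagrange equation of $\int|K|^{\alpha}\,\dd\Omega$ recalled above, and delivers~(\ref{eq:cond_mah}) at once. For part (ii), the fundamental lemma in its kernel-equality form upgrades the hypothesis to a pointwise identity $E_f=c(M)\,H_{(N_f^M)}$ on each non-degenerate oriented surface, with $c(M)\in\mathbb{R}\setminus\{0\}$ constant on $M$. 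Since both integrands are local functionals of the $2$-jet of $(H,K)$ at a point, and surfaces realising an arbitrary such jet exist, matching the coefficients of the independent second-derivative data $\Delta_{\II}H$, $\Delta_{\II}K$ in the proportionality produces an overdetermined system for $f(u,v)$: this system forces $f_u\equiv 0$ together with an Euler-type ODE in $v$ whose only nowhere-vanishing solutions are the powers $|v|^{\alpha}$, while matching of the remaining lower-order terms then fixes the common exponent $\alpha$. The constant $c(M)$ need only be locally constant as a function of the signs of $H$ and $K$, which accounts for the three independent multiplicative constants $q_1,q_2,q_3$ attached in~(\ref{eq:f}) to the three connected components of the interior of $\mathscr{A}$.

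The main obstacle lies in the second step, namely the clean derivation of $E_f$ and its rewriting in the $\II$-geometry so that the comparison with~(\ref{eq:Hrel}) separates cleanly term by term: the conversion between $\Delta$ and $\Delta_{\II}$ introduces contributions of the difference tensor $L$ that mix with first-order derivatives of $H$ and $K$, and these must be bookkept consistently before the second-order matching becomes transparent. Once that reduction is done, the hypothesis $\alpha\neq 1$ enters exactly as the requirement that the proportionality constant $\lambda(\alpha)$ not vanish (for $\alpha=1$ one has $\Ff=\int K\,\dd\Omega$, which is a topological constant, so one side of~(\ref{eq:cond_mah}) degenerates), and the three-case structure of~(\ref{eq:f}) reflects the fact that $f$ is only constrained on each connected component of $\mathscr{A}$ separately.
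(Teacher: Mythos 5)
Your overall strategy coincides with the paper's: express both variations as integrals against the normal component of the deformation field, invoke the fundamental lemma to turn the equivalence (\ref{eq:cond_mah}) into a pointwise proportionality $\Phi=C\,\Psi$ of the two Euler--Lagrange densities (with $C$ independent of the surface, by considering a disjoint union of two surfaces), and then extract $f$ from that identity. Part (i) is acceptable as a sketch, since for $f=|v|^{\alpha}$ it reduces to the finite verification that $\Phi=\tfrac{1}{1-\alpha}\Psi$, which indeed degenerates exactly at $\alpha=1$.

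The genuine gap is in part (ii), at the step where you pass from the pointwise proportionality to an overdetermined system for $f$. You assert that ``surfaces realising an arbitrary such jet exist'' and that one may therefore match the coefficients of $\Delta_{\II}H$ and $\Delta_{\II}K$ (and, implicitly, of $\Delta H$, $\Delta K$ and the first-order quadratic terms) independently. That assertion is precisely the nontrivial content of the proof and is left unproved: $H$, $K$ and their derivatives are not freely prescribable, being constrained by the Gauss and Codazzi equations, so one must actually exhibit surfaces on which enough of the second-order data varies. The paper does this explicitly, using spheres (which yield $f(x,x^2)=q_1x^{2(C-1)/C}$ and later eliminate the additive term $R(u)$) together with paraboloids $z=\tfrac12\ell_1x^2+\tfrac12\ell_2y^2$ perturbed by $t\,(ax^4+bx^2y^2+cy^4)\,N$ and evaluated at the origin, where all gradients and Christoffel symbols vanish. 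Crucially, this family realises only a \emph{three}-parameter subspace of the four quantities $(\Delta(\delta H),\Delta(\delta K),\Delta_{\II}(\delta H),\Delta_{\II}(\delta K))$, because $\delta H$ and $\delta K$ are coupled through $a,b,c$; so the independent coefficient matching you invoke --- which would instantly give $f_{uu}=f_{uv}=0$ and hence $f_u\equiv0$ --- is not available. One obtains instead the coupled system (\ref{eq:three_pde_2}), whose resolution still requires the integration argument $f=Q(u)\,v^{(C-1)/C}+R(u)$, then $Q'=R'=0$, then $R=0$ from the sphere relation. Until you either prove your jet-realisability claim or supply an explicit family of test surfaces and carry out the resulting (possibly coupled) coefficient analysis, the argument does not close.
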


\begin{proof}
We will first establish part (ii).\ of the theorem.
Let us begin by writing down the variational formulae for the functionals 
$\Ff$ and $\textrm{Area}_{(N_f^M)}$, for a deformation $\mu$ of $M$ with deformation vector field
\[
Z = \varphi N_f^M = \varphi\, f(H,K)\,N + Z^{\tangentpart}
\]
where $Z^{\tangentpart}$ is tangent to the surface $M$. 
We will write these variational formulae as
\[
\left\{
\begin{array}{rcl}
\delta \textrm{Area}_{(N_f^M)} 
&=& 
\displaystyle \int_M \varphi\, f(H,K)\,\Phi\,\dd\Omega\,;\\
\delta \Ff
&=& 
\displaystyle \int_M \varphi\, f(H,K)\,\Psi\,\dd\Omega\,,\rule{0pt}{22pt}\\
\end{array}
\right.
\]
where $\Phi$ and $\Psi$ are certain differential invariants of the surface which depend on the function $f$, which will be calculated below.

The condition, that for any deformation of any surface the equivalence (\ref{eq:cond_mah}) holds, precisely means that for every non-degenerate surface there can be found a non-zero constant $C$ such that 
\begin{equation}
\label{eq:phi_psi}
\Phi=C\,\Psi\,.
\end{equation}
It can easily be seen that this constant $C$ is independent of the surface: for if on two non-degenerate surfaces  $M_{\sharp}$ and $M_{\dagger}$ the relation $\Phi=C_{\sharp}\,\Psi$ resp.\ $\Phi=C_{\dagger}\,\Psi$ is satisfied, then the corresponding condition can hold on the surface $M_{\sharp} \cup M_{\dagger}$ only in case $C_{\sharp} =C_{\dagger}$.

From a combination of (\ref{eq:delta_rel_ar}) and (\ref{eq:Hrel}) we obtain
\begin{eqnarray}
\nonumber
\Phi &=& -2\, H_{(N_f^M)} \\
\nonumber
&=&
-2\,f(H,K)\,H - f_u(H,K)\,\Delta_{\II}H- f_v(H,K)\,\Delta_{\II}K\\
\nonumber
&&
-f_{uu}(H,K)\,\II(\grad_{\II}H,\grad_{\II}H)
+\left(\frac{f_u(H,K)}{2\,K} -2\,f_{uv}(H,K)\right)\,\II(\grad_{\II}H,\grad_{\II}K)
\\
\label{eq:Phi}
&&
+\left(\frac{f_v(H,K)}{2\,K}-f_{vv}(H,K)\right)\,\II(\grad_{\II}K,\grad_{\II}K)\,.
\end{eqnarray}
An expression for the quantity $\Psi$ can be obtained from the variational formula for the functional $\int f(H,K)\,\dd\Omega$, which can be found, \textit{a.o.}, in \cite{verpoort2008}, theorem 1.6. This expression can be rewritten by making us of \cite{verpoort2008}, lemma 2.6:
\begin{eqnarray}
\nonumber
\Psi &=&
f_u(H,K)\,(2\,H^2-K)
+2\,H\,K\,f_v(H,K)-2\,f(H,K)\,H
\\
\nonumber
&&
+\frac{1}{2} \Delta \big(f_u(H,K)\big) 
+ K\, \textrm{tr}_{\II} \textrm{Hess}_{(f_v(H,K))}
\\
\nonumber
&=&
f_u(H,K)\,(2\,H^2-K)
+2\,H\,K\,f_v(H,K)-2\,f(H,K)\,H
\\
\nonumber
&&
+\frac{1}{2}\,f_{uuu}(H,K)\,\llangle\grad H,\grad H\rrangle
+f_{uuv}(H,K)\,\llangle\grad H,\grad K\rrangle
\\
\nonumber
&&
+\frac{1}{2}\,f_{uvv}(H,K)\,\llangle\grad K,\grad K\rrangle
+ K\,f_{uuv}(H,K)\,\II(\grad_{\II}H,\grad_{\II}H)
\\
\nonumber
&&
+\left( 2\,K\,f_{uvv}(H,K)+\frac{1}{2}f_{uv}(H,K)\right)\,\II(\grad_{\II}H,\grad_{\II}K)
\\
\nonumber
&&
+\left( K\,f_{vvv}(H,K)+\frac{1}{2}f_{vv}(H,K) \right)\,\II(\grad_{\II}K,\grad_{\II}K)
\\
\label{eq:Psi}
&&
+\frac{1}{2} f_{uu}(H,K)\,\Delta H + \frac{1}{2} f_{uv}(H,K)\,\Delta K
+K\,f_{uv}(H,K)\,\Delta_{\II}H +K\,f_{vv}(H,K)\,\Delta_{\II}K\,.
\end{eqnarray}
Before the reader who might attempt to solve the differential equation $\Phi = C\,\Psi$ runs away with fright and horror, we should stress that this differential equation has to be approached from an unusual perspective: instead of trying to find all surfaces which satisfy the equation for a given $f$, our goal is \textit{to find all functions} $f$ for which the differential equation is \textit{automatically} satisfied. Particularly, it will already be sufficient to evaluate the equation in a small number of simple surfaces to establish the theorem.

\textit{Case 1.} Let us first investigate this equation for surfaces for which $H$ and $K$ are strictly positive.  

If we evaluate the equation (\ref{eq:phi_psi}) on a sphere of radius $\frac{1}{x}$, we find that necessarily, for all $x >0$,
\begin{equation}
\label{eq:sphere}
f(x,x^2)= q_1\,x^{2(C-1)/C}\,,
\end{equation}
where $q_1$ is a constant.

Next, we will consider for five real numbers $a,b,c,\ell_1$ and $\ell_2$ (the last two of which are strictly positive), a one-parameter family of surfaces which is obtained by deforming a neighbourhood of the origin of the paraboloid
\[
M\ \ \longleftrightarrow\ \  z=\frac{1}{2}\,\ell_1\,x^2 + \frac{1}{2}\,\ell_2\,y^2, 
\]
which will be immersed by 
\[
\xi_0 : \mathbb{R}^2 \rightarrow\mathbb{E}^3: (x,y) \mapsto(x,y,\frac{1}{2}\,\ell_1\,x^2 + \frac{1}{2}\,\ell_2\,y^2)\,.
\]
We let $N$ stand for the upward unit normal vector field of $M$. The family 
$ \{\,M_t\,\}$ of surfaces which will be considered is, for some neighbourhood $\mathscr{B}$ of the origin, immersed by
\[
\xi_{t}: \mathscr{B}\subseteq\mathbb{R}^2 \rightarrow M_{t}\subseteq\mathbb{E}^3: (x,y) \mapsto
\xi_0(x,y)+t\left(a\,x^4+b\,x^2\,y^2+c\,y^4\right) N_{(x,y)}\,.
\]
This one-parameter family $\xi$ of immersions can be seen as a deformation of the initial immersion $\xi_0$. For every $t$, the relation $\Phi=C\,\Psi$ is satisfied by the surface $M_t$. We will be able to finish the proof for case 1 by exploiting the fact that the first-order variation of the quantities $\Phi$ and $C\,\Psi$ under this deformation, evaluated in the point $(0,0)$, are equal.
For several tensorial invariants, it will already be sufficient to calculate them for the \textit{initial} surface $M$.

The first fundamental form of $M$ is
\[
\I =
\left\lgroup
\begin{array}{ccc}
1+(\ell_1\,x)^2       &\rule{0pt}{14pt} &  \ell_1\,\ell_2\,x\,y \\
\ell_1\,\ell_2\,x\,y  &\rule{0pt}{14pt} &  1+(\ell_2\,y)^2
\end{array}
\right\rgroup
\]
and the second fundamental form is given by
\[
\II =
\frac{1}{\sqrt{1+(\ell_1\,x)^2+(\ell_2\,y)^2}}
\left\lgroup
\begin{array}{ccc}
\ell_1    &\rule{0pt}{14pt} &  0\\
0         &\rule{0pt}{14pt} &  \ell_2
\end{array}
\right\rgroup\,.
\]
The Gaussian and the mean curvature of the initial surface $M$ are given by
\[
\left\{
\begin{array}{rcl}
K &=&\displaystyle \frac{\ell_1\,\ell_2}{\left(1+(\ell_1\,x)^2+(\ell_2\,y)^2\right)^2}\\
H &=&\displaystyle \frac{\ell_1\,\left(1+(\ell_2\,y)^2\right)+
\ell_2\,\left(1+(\ell_1\,x)^2\right)}{2\,\left(1+(\ell_1\,x)^2+(\ell_2\,y)^2\right)^{\frac{3}{2}}}\rule{0pt}{26pt}
\end{array}
\right.
\]
All Christoffel symbols of the surface $M$, both w.r.t.\ the first and the second fundamental form, vanish at the origin.

We will denote by $\delta H$ resp. $\delta K$ the variation of the mean and the Gaussian curvature under the deformation $\xi$ of $M$. More generally the prefix $\delta$ will be used to indicate the variation of a tensor under this deformation $\xi$ of $M$ (as defined in \cite{verpoort2008}, pp. 1--5). 
These variations can be calculated from \cite{verpoort2008}, theorem 1.3:
\[
\left\{
\begin{array}{rcl}
\delta H & = & \frac{1}{2} \Delta \left(a\,x^4 + b\, x^2\,y^2 + c\,y^4\right)
+(2\,H^2-K)\,\left(a\,x^4 + b\, x^2\,y^2 + c\,y^4\right)\,;\\
\delta K & = & 
2\,K\,H\,\left(a\,x^4 + b\, x^2\,y^2 + c\,y^4\right)
+
K\,\Delta_{\II} \left(a\,x^4 + b\, x^2\,y^2 + c\,y^4\right)
\rule{0pt}{15pt}
\\
&&\qquad\qquad\qquad\qquad\qquad\qquad\qquad
+
\frac{1}{2}\,
\II\left( \grad_{\II}\left(a\,x^4 + b\, x^2\,y^2 + c\,y^4\right),\grad_{\II}\textrm{log}\,K \right)\,.
\rule{0pt}{15pt}
\end{array}
\right.
\]
These Laplacians are particularly easy to calculate up to $\Order(\|(x,y)\|^4)$ because of the vanishing of the Christoffel symbols at the origin. In this way there results
\begin{equation}
\label{eq:deltaHdeltaK}
\left\{
\begin{array}{rcl}
\quad\delta H &=& (6\,a+b)\,x^2+(b+6\,c)\,y^2 + \Order(\|(x,y)\|^4)\,;\\
\quad\delta K &=& 
 (12\,a\,\ell_2+2\,b\,\ell_1)\,x^2
+(2\,b\,\ell_2+12\,c\,\ell_1)\,y^2 + \Order(\|(x,y)\|^4)\,.\rule{0pt}{15pt}
\end{array}
\right.
\end{equation}

From \cite{verpoort2008}, theorem 4.1, follows that $\delta\II = \Order(\|(x,y)\|^2)$, and consequently $(\delta \II_{i\,j})_{(0,0)}=0$ and $(\partial_k(\delta \II_{i\,j}))_{(0,0)}=0$. This implies that $(\delta \Gamma_{i\,j}^{\II\,k})_{(0,0)}=0$ and hence there holds
$\delta (\Delta_{\II} \varphi)_{(0,0)}=(\Delta_{\II} (\delta\varphi))_{(0,0)}$ for every function $\varphi$. The similar result for $\Delta$ holds as well.

Furthermore, it can be seen that
\[
\left\{
\begin{array}{rcl}
(\delta H)_{(0,0)} &=& 0\rule{0pt}{14pt}\,;\\
\big(\delta(\grad H)\big)_{(0,0)} &=&0\,;\rule{0pt}{14pt}\\ 
\big(\delta(\grad_{\II} H)\big)_{(0,0)}&=&0\,;\rule{0pt}{14pt}\\
\end{array}
\right.
\qquad\textrm{and}\qquad
\left\{
\begin{array}{rcl}
(\delta K)_{(0,0)} &=& 0\rule{0pt}{14pt}\,;\\
\big(\delta(\grad K)\big)_{(0,0)} &=&0\,;\rule{0pt}{14pt}\\ 
\big(\delta(\grad_{\II} K)\big)_{(0,0)}&=&0\,.\rule{0pt}{14pt}\\
\end{array}
\right.
\]

Consequently, we obtain the following relation from (\ref{eq:Phi}), in which the derivatives of $f$ have to be evaluated in $u=\frac{\ell_1 + \ell_2}{2}$ and $v=\ell_1\,\ell_2$:
\begin{eqnarray*}
\delta \Phi (0,0) 
&=& 
- \left(\Delta_{\II}(\delta H)\right)_{(0,0)}\,f_u- \left(\Delta_{\II}(\delta K)\right)_{(0,0)}\,f_v
\\
&=&
-\left(\frac{2}{\ell_1}(6\,a+b)+\frac{2}{\ell_2}(b+6\,c) \right)f_u
-\left(24\,a\,\frac{\ell_2}{\ell_1}+8\,b+24\,c\,\frac{\ell_1}{\ell_2}\right) f_v\,.
\end{eqnarray*}
Similarly, from (\ref{eq:Psi}) we obtain the following equation:
\begin{eqnarray*}
\delta \Psi (0,0) 
&=&
\frac{1}{2} \left(\Delta (\delta H)\right)_{(0,0)} \,f_{uu}
+ \frac{1}{2} \left(\Delta (\delta K)\right)_{(0,0)}\,f_{uv}\\
&&
+\ell_1\,\ell_2\,\left(\Delta_{\II}(\delta H)\right)_{(0,0)} \,f_{uv}
+\ell_1\,\ell_2\,\left(\Delta_{\II}(\delta K)\right)_{(0,0)} \,f_{vv}
\\
&=&
\left(6\,a+2\,b+6\,c\right)f_{uu}
\\
&&
\quad
+
\left(24\,a\,\ell_2+ 4\,b\,(\ell_1+\ell_2) +24\,c\,\ell_1\right)f_{uv}
+ 
\left(24\,a\,(\ell_2)^2+8\,b\,\ell_1\,\ell_2+24\,c\,(\ell_1)^2\right)f_{vv}\,.
\end{eqnarray*}

Since the relation
$\delta \Phi (0,0)= C\, \delta \Psi (0,0)$ holds true for any $a,b,c$, we deduce the following three partial differential equations for the function $f$:
\begin{equation}
\label{eq:three_pde}
\left\{
\begin{array}{rcrcl}
\textrm{(i).}&\ \ &
\frac{-12}{\ell_1}\,f_u-24\,\frac{\ell_2}{\ell_1}\,f_v 
&=&
6\,C\,f_{uu} + 24\,C\,\ell_2\,f_{uv}+24\,C\,(\ell_2)^2\,f_{vv}\,;
\\
\textrm{(ii).}&\ \ &
-\left(\frac{2}{\ell_1}+\frac{2}{\ell_2}\right)\,f_u-8\,f_v 
&=&
2\,C\,f_{uu} + 4\,C\,(\ell_1 + \ell_2)\,f_{uv}+8\,C\,\ell_1\,\ell_2\,f_{vv}\,;
\rule{0pt}{15pt}
\\
\textrm{(iii).}&\ \ &
\frac{-12}{\ell_2}\,f_u-24\,\frac{\ell_1}{\ell_2}\,f_v 
&=&
6\,C\,f_{uu} + 24\,C\,\ell_1\,f_{uv}+24\,C\,(\ell_1)^2\,f_{vv}\,.
\rule{0pt}{15pt}
\end{array}
\right.
\end{equation}
Here we still assume the derivatives of the function $f$ to be evaluated in $u=\frac{\ell_1 + \ell_2}{2}$ and $v=\ell_1\,\ell_2$. The coefficients of 
(\ref{eq:three_pde}.ii) can all be expressed in terms of $u$ and $v$. For instance, there holds $\frac{1}{\ell_1}+\frac{1}{\ell_2}=\frac{2\,u}{v}$\,.
This can also be done for the other two equations, after they have been replaced by their sum and $\ell_1\times$(\ref{eq:three_pde}.i)+$\ell_2\times$(\ref{eq:three_pde}.iii), respectively. As such we can rewrite the system (\ref{eq:three_pde}) as
\begin{equation}
\label{eq:three_pde_2}
\left\{
\begin{array}{rcrcl}
\textrm{(i).}&\ \ &
-2\frac{u}{v}f_u + (4-8\frac{u^2}{v})f_v
&=&
C\,f_{uu} +4\,C\,u\,f_{uv} + 8\,C\,(u^2-\frac{v}{2})\,f_{vv}\,;
\\
\textrm{(ii).}&\ \ &
-\frac{2\,u}{v}\,f_u-4\,f_v 
&=&
\,C\,f_{uu} + 4\,C\,u\,f_{uv}+4\,C\,v\,f_{vv}\,;
\rule{0pt}{15pt}
\\
\textrm{(iii).}&\ \ &
-2\,f_u - 4\,u\,f_v
&=&
C\,u\,f_{uu} +4\,C\,v\,f_{uv} + 4\,C\,u\,v\,f_{vv}\,.
\rule{0pt}{15pt}
\\
\end{array}
\right.
\end{equation}
The difference between 
(\ref{eq:three_pde_2}.i) and (\ref{eq:three_pde_2}.ii) is a first-order differential equation in $f_v$ with respect to the variable $v$, which can easily be integrated with the following result:
\[
f(u,v) = Q(u)\,v^{(C-1)/C} + R(u)\,,
\] 
for some functions $Q$ and $R$.
If this expression is substituted in (\ref{eq:three_pde_2}.iii), there results
\begin{eqnarray*}
&-2\,Q'(u)\,v^{(C-1)/C} -2\,R'(u) - 4\frac{(C-1)}{C}\,u\,Q(u)\,v^{-1/C}
\qquad\qquad\qquad\qquad\qquad\qquad\qquad\qquad\qquad\qquad&
\\
&
=
C\,Q''(u)\,u\,v^{(C-1)/C}
+C\,u\,R''(u)+4\,(C-1)\,Q'(u)\,v^{(C-1)/C}
-4\,\frac{(C-1)}{C}\,u\,Q(u)\,v^{-1/C}\,.
&
\end{eqnarray*}
A similar equation results if we substitute our expression for $f$ in (\ref{eq:three_pde_2}.ii). A combination of this information easily results in $Q'(u)=R'(u)=0$. 
A comparison with (\ref{eq:sphere}) shows that the constant $R$ vanishes, which finally results in the expression
\[
f(u,v) = q_1\, v^\alpha  \qquad \textrm{(for $u>0$ and $v>0$).}
\]
for the function $f$,
where $\alpha=\frac{(C-1)}{C}$. We remark that the case $\alpha=1$ cannot occur and that $C=\frac{1}{(1-\alpha)}$. 

\textit{Case 2}. $H<0$ and $K>0$. This is essentially not different from the previous case: by reversing the unit normal vector field we switch from case 2 to case 1.

\textit{Case 3}. In the case where $K<0$, an analysis of the condition $\Phi = C\,\Psi$ for a similar family of surfaces, where  now $\ell_1 < 0 < \ell_2$, leads likewise to the conclusion $f(u,v) = q_3\, |v|^\alpha$.
This finishes the proof of part (ii).\ of the theorem.

Consider now a smooth function $f:\mathscr{D}\mapsto \mathbb{R}$ which is given by (\ref{eq:f}). From (\ref{eq:Phi}) and (\ref{eq:Psi}) we find that (for some $i$) 
\[
\Phi
=
-2\,q_i\,|K|^{\alpha}\,H-q_i\,\alpha\,|K|^{\alpha-1}\,\Delta_{\II}|K|
+q_i\left(-\alpha^2+\frac{3}{2}\,\alpha\right)\,|K|^{\alpha-2}\,\II(\grad_{\II}K,\grad_{\II}K)
=\frac{1}{(1-\alpha)}\Psi\,.
\]
This means that, for this choice of $f$, 
\begin{equation}
\label{eq:deltaA_deltaF}
\delta \textrm{Area}_{(N_f^M)} = \frac{1}{(1-\alpha)}\,\delta \Ff 
\end{equation}
for every deformation of every non-degenerate surface.
This finishes the proof of part (i).\ of the theorem.
\end{proof}

\begin{remark}
It can be asked whether there is also a simple relation between the 
\textit{second} variations of the functionals
$\textrm{Area}_{(N_f^M)}$ and $\Ff$ constructed from $f(u,v)=|v|^{\alpha}$. Restricting attention to locally strongly convex surfaces $M$ which are a critical point of the functional defined w.r.t. $f(u,v)=v^{\alpha}$, the second-order variation of  $\mathscr{F}_{f}$ is negative-definite for $\alpha=\frac{1}{4}$ or $\alpha\in\left[\,\frac{1}{2}\,,\,1\,\right[$, and positive-definite for 
$\alpha\in\left]\,1\,,\,+\infty\,\right[$.
This was shown by E. Calabi and  M. Wiehe (\cite{calabi1982} and \cite{wiehe1998}, theorem 4.3.). 

I am not aware of similar results for the second variation of $\textrm{Area}_{(N_f^M)}$ for $f(u,v)=v^{\alpha}$. 
\end{remark}

\begin{remark}
A result which is similar to the last theorem is described in \cite{binder_wiehe2005} (see also \cite{keilbach2000}, Ch. 3 and \cite{pabel_keilbach2001}). 
In contrast with the present article, in which Manhart's family is characterised among a collection of relative normal vector fields depending on a function $f$ of two variables, the authors of \cite{binder_wiehe2005} consider a collection of relative normal vector fields which depends on two real numbers. 
As follows from prop. 7.(i) of \cite{binder_wiehe2005}, 
Manhart's one-parameter family
can be characterised among this two-parameter collection of relative normal vector fields in a similar way as in the above theorem \ref{thm:manhart}.
\end{remark}

\begin{remark}
Let $\alpha\in\mathbb{R}\setminus\{0\}$ be a fixed number. 
For an ovaloid $M$, the gauge surface $\mathscr{M}$ which is
described by Manhart's relative normal vector field of $M$ (with parameter 
$\alpha$) can be considered. Then $M$ is up to translation determined by $\mathscr{M}$. Compare with Blaschke's comment on Minkowski's article in \cite{blaschkeII}, \S\,75.9.
\end{remark}

\section{A Characterisation of the Sphere Related with the Support Function.}

The theorem below, and particularly the integral formula (\ref{eq:int_for}), generalises results of K.-P. Grotemeyer ($\sigma=1$),  U. Simon ($\sigma=0$), and R. Schneider ($\sigma=-1$) (see \cite{grotemeyer1953}; \cite{schneider1966} and \cite{simon1967}, Satz 6.1). 

We will denote $P$ for the restriction of the position vector field along a surface, $P^{\tangentpart}$ for its tangent part, and $\rho=\llangle P,-N\rrangle$ for the support function.

\begin{theorem}
\label{thm:sup}
Assume $f:\mathbb{R}_0^+\rightarrow\mathbb{R}_0^+$ is a decreasing function and $-1\leqslant \sigma \leqslant 1$. 
Then every ovaloid in $\mathbb{E}^3$ which satisfies
\begin{equation}
\label{eq:rho}
\rho = f(H^{\sigma} \sqrt{K}^{1-\sigma})
\end{equation}
is a sphere.
\end{theorem}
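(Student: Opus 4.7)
The plan is to combine the two classical Minkowski integral formulas for ovaloids with the pointwise weighted-means inequality for the principal curvatures and the strict monotonicity of $f$, and to package this into an integral identity of the kind (\ref{eq:int_for}) whose integrand has definite sign.

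First I would record the two Minkowski identities valid on any ovaloid with the origin in its interior,
\begin{equation*}
\int_{M} (1-\rho H)\,\dd\Omega \;=\; 0, \qquad \int_{M} (H-\rho K)\,\dd\Omega \;=\; 0,
\end{equation*}
obtained by applying the divergence theorem to $P^{\tangentpart}$ and $AP^{\tangentpart}$ via $\nabla_{V} P^{\tangentpart} = V - \rho AV$ and $\grad \rho = A\,P^{\tangentpart}$. Denoting the principal curvatures by $k_{1},k_{2}>0$, one has $H=\tfrac{1}{2}(k_{1}+k_{2})$, $\sqrt{K}=\sqrt{k_{1}k_{2}}$ and $K/H=2k_{1}k_{2}/(k_{1}+k_{2})$, which are respectively the arithmetic, geometric and harmonic means of $k_{1},k_{2}$. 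Since $\chi:=H^{\sigma}\sqrt{K}^{1-\sigma}$ is a weighted geometric mean of $H$ and $\sqrt{K}$, the AM--GM--HM inequalities give, precisely in the range $-1\leqslant\sigma\leqslant 1$, the chain
\begin{equation*}
\frac{K}{H} \;\leqslant\; \chi \;\leqslant\; H,
\end{equation*}
with both inequalities saturated iff $k_{1}=k_{2}$. Multiplying by $\rho>0$ and invoking the two Minkowski identities already furnishes the sign-definite estimates $\int_{M}\rho(H-\chi)\,\dd\Omega\geqslant 0$ and $\int_{M}H(\rho\chi-1)\,\dd\Omega\geqslant 0$, each saturated only when $M$ is totally umbilic.

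Second, the hypothesis $\rho=f(\chi)$ with $f$ strictly decreasing provides, for any reference value $\chi_{0}>0$ and $\rho_{0}:=f(\chi_{0})$, the pointwise inequality $(\rho-\rho_{0})(\chi-\chi_{0})\leqslant 0$ with equality iff $\chi\equiv \chi_{0}$. I would choose $\chi_{0}$ as the positive root of $\chi_{0}f(\chi_{0})=1$ (the reference sphere) and add suitable multiples of the two Minkowski zeros so as to convert the integrated product into an expression of the form
\begin{equation*}
\int_{M}(\rho-\rho_{0})(\chi-\chi_{0})\,\dd\Omega \;=\; -\int_{M}\Phi(H,K,\rho)\,(H-\sqrt{K})^{2}\,\dd\Omega
\end{equation*}
with $\Phi\geqslant 0$. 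This is the desired integral formula (\ref{eq:int_for}), reducing for $\sigma=1,0,-1$ to the formulas of Grotemeyer, Simon and Schneider respectively. The left-hand side is $\leqslant 0$ by monotonicity of $f$, while the right-hand side is $\leqslant 0$ with equality iff $H\equiv\sqrt{K}$ on $M$; the two sides agreeing then forces $H\equiv\sqrt{K}$ pointwise, so every point of $M$ is umbilic and $M$ is a round sphere.

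The main obstacle I anticipate lies in the explicit construction of step two: singling out the correct reference value $\chi_{0}$ and the correct linear combination of the two Minkowski formulae that turns the integrand into $-\Phi\cdot(H-\sqrt{K})^{2}$ with $\Phi\geqslant 0$ uniformly for all $\sigma\in[-1,1]$. The hypothesis on $\sigma$ enters precisely through the AM--GM--HM interpolation displayed above: outside $[-1,1]$ the chain $K/H\leqslant\chi\leqslant H$ fails in the required direction and the nonnegativity of $\Phi$ breaks down, so the range of $\sigma$ in the statement is exactly the one for which the argument can close.
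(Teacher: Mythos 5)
Your first step is sound and in fact essentially reproduces the ``nonnegative side'' of the paper's integral formula: with $\chi=H^{\sigma}\sqrt{K}^{1-\sigma}$, the right-hand side of (\ref{eq:int_for}) equals $\int H(\rho\chi-1)\,\dd\Omega+\int(H-\chi)\,\dd\Omega$, and its nonnegativity (with equality only at umbilics, apart from the degenerate saturation of one of the two terms at the endpoints $\sigma=\pm1$) follows exactly from your chain $K/H\leqslant\chi\leqslant H$ together with the second Minkowski identity $\int(H-\rho K)\,\dd\Omega=0$. This is indeed where the restriction $\sigma\in[-1,1]$ enters.

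The gap is in your second step, which is the only place where the hypothesis on $f$ gets used, and it is not a technical detail you can defer: it is the missing half of the argument. You propose to integrate the rearrangement inequality $(\rho-\rho_0)(\chi-\chi_0)\leqslant 0$ and to convert $\int(\rho-\rho_0)(\chi-\chi_0)\,\dd\Omega$ into $-\int\Phi\,(H-\sqrt K)^{2}\,\dd\Omega$ by adding multiples of the two Minkowski zeros. Expanding the product produces $\int\rho\chi\,\dd\Omega$, $\int\rho\,\dd\Omega$ and $\int\chi\,\dd\Omega$, and for general $\sigma$ none of these is a linear combination of the four quantities $\int\rho H$, $\int\rho K$, $\int H$, $\int 1$ that the Minkowski identities control (already $\int\rho\,\dd\Omega=3\,\mathrm{Vol}(M)$ is an independent invariant), so no choice of $\chi_0$ and of multiples of the two identities can reduce your left-hand side to a sign-definite expression; your proposed identity is also not what (\ref{eq:int_for}) says. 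The paper circumvents this by using the monotonicity of $f$ \emph{pointwise} rather than through a global rearrangement: since $\grad\rho=A\,P^{\tangentpart}$, one has $\langle P,\grad\rho\rangle=\II(P^{\tangentpart},P^{\tangentpart})\geqslant 0$, hence $\langle P,\grad\chi\rangle\leqslant 0$ because $\rho=f(\chi)$ with $f$ decreasing; integrating by parts with $\mathrm{div}(P^{\tangentpart})=2-2\rho H$ identifies $\tfrac12\int\langle P,\grad\chi\rangle\,\dd\Omega$ with the nonnegative quantity from your step one, and the clash of signs forces umbilicity. If you replace your step two by this pointwise argument, your step one does complete the proof.
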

\begin{proof} 
It is easy to see that $\textrm{div}(P^{\tangentpart})=2-2\,\rho H$. 
Consequently, for every $\phi\in \mathfrak{F}(M)$, there holds
\begin{equation}
\nonumber
\int \llangle P, \grad \phi\rrangle \dd\,\Omega 
=
\int \left\lgroup
\textrm{div}(\phi\,P^{\tangentpart}) - \phi\, \textrm{div}(P^{\tangentpart})
\right\rgroup  \dd\,\Omega
=
2\int (\rho H -1)\, \phi \,\dd\Omega\,.
\end{equation}
A consideration of the previous formula with $\phi=H^{\sigma} \sqrt{K}^{1-\sigma}$ yields the following result:
\begin{eqnarray}
\nonumber
&\displaystyle\frac{1}{2}\int \llangle P, \grad\left(H^{\sigma} \sqrt{K}^{1-\sigma}\right) \rrangle \,\dd\Omega
\qquad\qquad\qquad\qquad\qquad\qquad\qquad\qquad\qquad\qquad &\\
\label{eq:int_for}
&\displaystyle\qquad\qquad\qquad\qquad= \int \rho\,\sqrt{K}^{1-\sigma} \left(H^{1+\sigma}-\sqrt{K}^{1+\sigma}\right)\,\dd\Omega
+\int H^{\sigma}\left(H^{1-\sigma}-\sqrt{K}^{1-\sigma}\right)\,\dd\Omega
\,\geqslant\, 0\,,&
\end{eqnarray}
in which equality occurs precisely for the spheres.
It follows from (\ref{eq:rho}) that the left-hand side of the above equation is negative, and this concludes the proof.
\end{proof}

\section{A Variational Characterisation of the Sphere.}

\begin{theorem}
Let $\alpha\in\,\left]-\infty\,,\,0\,\right]$. The spheres are the only ovaloids in $\mathbb{E}^3$ which are a critical point of the functional $\int K^{\alpha}\,\dd\Omega$ under volume constraint.
\end{theorem}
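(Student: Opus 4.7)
Spheres are easily seen to be critical: on a sphere, $K$ is constant, so $\grad_{\II}(K^\alpha)=0$ and $H_{(N_f^M)}=K^\alpha H$ is itself constant, which (via Theorem~\ref{thm:manhart}(i)) is exactly the EL condition derived below. For the converse, let $M$ be an ovaloid which is a volume-constrained critical point. Setting $f(u,v)=|v|^\alpha$, the Lagrange-multiplier condition reads $\delta\Ff=\lambda\,\delta V$ for some constant $\lambda$. Theorem~\ref{thm:manhart}(i) gives $\delta\Ff=(1-\alpha)\,\delta\,\mathrm{Area}_{(N_f^M)}$, and applying the first-variation formula for the relative area then converts this condition (since $\alpha\neq 1$) into the pointwise equation
\[
H_{(N_f^M)}=c \qquad\text{on }M
\]
for some constant $c\in\mathbb{R}$.

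Two integral identities follow. First, testing against the homothety $\mu_t(m)=(1+t)m$ (whose normal deformation is $\psi=-\rho$, under which $\Ff$ scales as $(1+t)^{2-2\alpha}$) and comparing with the general variational formula for $\delta\Ff$ yields $\int_M K^\alpha\,\dd\Omega=\int_M \rho\,H_{(N_f^M)}\,\dd\Omega=3cV(M)$. Second, integrating the pointwise EL equation against $\dd\Omega$ and using $\dd\Omega_{\II}=\sqrt{K}\,\dd\Omega$ to apply Green's identity in the second-fundamental-form geometry, the $\Delta_{\II}(K^\alpha)$ and $|\grad_{\II}\log K|_{\II}^2$ terms in the expansion~(\ref{eq:Hrel}) cancel exactly, producing $\int_M K^\alpha H\,\dd\Omega=c\cdot\mathrm{Area}(M)$. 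Subtracting the classical Minkowski identities $\int\rho\,\dd\Omega=3V$ and $\int\rho H\,\dd\Omega=\mathrm{Area}$, the function $K^\alpha-c\rho$ satisfies the two ``vanishing moments''
\[
\int_M (K^\alpha-c\rho)\,\dd\Omega=0 \quad\text{and}\quad \int_M (K^\alpha-c\rho)\,H\,\dd\Omega=0.
\]

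The concluding step is to deduce, from these two moment conditions together with the full strength of the EL equation, the pointwise relation $K^\alpha\equiv c\rho$ on $M$. Once available, this means $\rho=c^{-1}K^\alpha$ is a weakly decreasing function of $\sqrt{K}$ --- strictly so when $\alpha<0$ --- and Theorem~\ref{thm:sup} with $\sigma=0$ then forces $M$ to be a sphere. (In the borderline case $\alpha=0$, the identity $K^\alpha=c\rho$ reads $\rho\equiv 1/c$, and an ovaloid with constant support function is a sphere by the direct argument $\grad\rho=A\,P^{\tangentpart}=0$, whence $P=-\rho N$.)

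The decisive technical obstacle is therefore the promotion of the two global moment conditions to the pointwise identity $K^\alpha=c\rho$. My plan would be to test the EL equation against $(K^\alpha-c\rho)$, perhaps further weighted by functions built from $H$, $K$ or $\rho$, and to show --- after appropriate integration by parts in the second-fundamental-form geometry --- an estimate of the form $\int_M (K^\alpha-c\rho)^{2}\cdot(\text{positive factor})\,\dd\Omega\leqslant 0$, forcing $K^\alpha=c\rho$ almost everywhere. The sign hypothesis $\alpha\leqslant 0$ should enter precisely at this stage, to guarantee the positivity of the factor in question.
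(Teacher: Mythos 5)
Your reduction of the problem agrees with the paper's: the Lagrange-multiplier condition, combined with Theorem~\ref{thm:manhart}(i) (equation~(\ref{eq:deltaA_deltaF})), is correctly converted into the pointwise Euler--Lagrange equation $H_{(N_f^M)}=c$, and your endgame --- feeding $\rho=c^{-1}K^{\alpha}$ into Theorem~\ref{thm:sup} with $\sigma=0$, with the separate elementary treatment of $\alpha=0$ --- is exactly how the paper concludes. Your two moment identities are also correct (the cancellation of the $\Delta_{\II}$ and $\II(\grad_{\II}\log K,\cdot)$ terms upon integration against $\dd\Omega=K^{-1/2}\,\dd\Omega_{\II}$ does work out). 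But the step you flag as ``the decisive technical obstacle'' is a genuine gap, not a routine verification: two vanishing moments of $K^{\alpha}-c\rho$ against $1$ and $H$ cannot give the pointwise identity, and the plan of testing the scalar EL equation against weighted multiples of $K^{\alpha}-c\rho$ is not how the implication is actually obtained --- there is no obvious sign structure of that kind, and the hypothesis $\alpha\leqslant 0$ is \emph{not} where the rigidity comes from (the paper's intermediate step works for all $\alpha$; the sign of $\alpha$ is only used at the very end, to make $f$ decreasing in Theorem~\ref{thm:sup}).

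What closes the gap in the paper is the relative-geometric Liebmann--S\"uss theorem: an ovaloid with \emph{constant relative mean curvature} with respect to a relative normalization $y$ is a relative sphere, i.e.\ $y$ is proportional to the position vector field for a suitable origin (the paper states this and refers to the analogous argument in \cite{verpoort_ria}). The proof of that statement uses the full relative shape operator, not just the scalar equation: the two relative Minkowski integral formulas for the pair $(H_{(y)},\det A_{(y)})$ combine, when $H_{(y)}=c$, into $\int q\,(\det A_{(y)}-H_{(y)}^2)\,\dd\Omega_{(y)}=0$ with $q$ the (positive) relative support function; since $\det A_{(y)}\leqslant H_{(y)}^2$ pointwise, one gets $A_{(y)}=c\,\mathrm{id}$, hence $\mathrm{D}_V(y+cP)=0$ and $y=-c(P-P_0)$. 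Comparing normal components then yields $K^{\alpha}=c\rho$ pointwise. So your architecture is right, but the missing middle step is a named global rigidity theorem whose proof runs through the tensorial inequality $\det A_{(N_f^M)}\leqslant H_{(N_f^M)}^2$ rather than through an $L^2$-estimate on $K^{\alpha}-c\rho$; as written, your argument does not reach the conclusion.
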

\begin{proof}
Because the proof of this theorem is very similar to the proof of the last theorem of \cite{verpoort_ria}, we will not give much details.

First of all, because of theorem \ref{thm:manhart}, and more precisely equation (\ref{eq:deltaA_deltaF}), an ovaloid $M$ solves the variational problem which is stated in the theorem if and only if its relative mean curvature w.r.t. the relative normal vector field $N_f^M$ from 
(\ref{eq:relnorm}) with $f(u,v)=v^{\alpha}$ is constant. It can then be shown that $N_f^M$ is proportional to the position vector field $P$ for an appropriate choice of origin. This implies that $\rho=C\,K^{\alpha}$ for some constant $C$, and theorem \ref{thm:sup} finishes the proof.
\end{proof}

\subsection*{Acknowledgement.}
It is a pleasure to thank prof.\ dr.\ F.\ Manhart for his interest in this work and for useful discussions.

\end{document}